\newtheorem{Thm}{Theorem} 
\newtheorem*{ThmA}{Theorem~A}
\newtheorem*{ThmB}{Theorem~B}
\newaliascnt{Lem}{Thm}
\newtheorem{Lem}[Lem]{Lemma}
\newaliascnt{Prop}{Thm}
\newtheorem{Prop}[Prop]{Proposition}
\newaliascnt{Cor}{Thm}
\newaliascnt{Con}{Thm}
\theoremstyle{definition}
\newaliascnt{Def}{Thm}
\newtheorem{Def}[Def]{Definition}
\newaliascnt{Ex}{Thm}
\numberwithin{equation}{section}
\renewcommand{\phi}{\varphi}
\newcommand{\C}{\operatorname{C}}
\newcommand{\Z}{\operatorname{Z}}
\newcommand{\NN}{\mathbb{N}}
\newcommand{\Aut}{\operatorname{Aut}}
\newcommand{\pcore}{\operatorname{O}}
\newcommand{\Irr}{\operatorname{Irr}}
\newcommand{\Ker}{\operatorname{Ker}}
\mathchardef\ordinarycolon\mathcode`\:  
\title{Landau's Theorem for $\pi$-blocks\\ of $\pi$-separable groups}
\author{Benjamin Sambale\footnote{Institut für Mathematik, Friedrich-Schiller-Universität Jena, 07737 Jena, Germany, 
\href{mailto:benjamin.sambale@uni-jena.de}{benjamin.sambale@uni-jena.de}}}
\date{\today}
\begin{document}
\frenchspacing
\maketitle
\begin{abstract}\noindent
Slattery has generalized Brauer's theory of $p$-blocks of finite groups to $\pi$-blocks of $\pi$-separable groups where $\pi$ is a set of primes. In this setting we show that the order of a defect group of a $\pi$-block $B$ is bounded in terms of the number of irreducible characters in $B$. This is a variant of Brauer's Problem 21 and generalizes Külshammer's corresponding theorem for $p$-blocks of $p$-solvable groups. At the same time, our result generalizes Landau's classical theorem on the number of conjugacy classes of an arbitrary finite group. The proof relies on the classification of finite simple groups.
\end{abstract}

\textbf{Keywords:} Brauer's Problem 21, $\pi$-blocks, number of characters\\
\textbf{AMS classification:} 20C15

\section{Introduction}

Many authors, including Richard Brauer himself, have tried to replace the prime $p$ in modular representation theory by a set of primes $\pi$. One of the most convincing settings is the theory of $\pi$-blocks of $\pi$-separable groups which was developed by Slattery~\cite{Slattery,Slattery2} building on the work of Isaacs and others (for precise definitions see next section). In this framework most of the classical theorems on $p$-blocks can be carried over to $\pi$-blocks. For instance, Slattery proved versions of Brauer's three main theorems for $\pi$-blocks. Also many of the open conjectures on $p$-blocks make sense for $\pi$-blocks. 
In particular, \emph{Brauer's Height Zero Conjecture} and the \emph{Alperin--McKay Conjecture} for $\pi$-blocks of $\pi$-separable groups were verified by Manz--Staszewski~\cite[Theorem~3.3]{ManzStaszewski} and Wolf~\cite[Theorem~2.2]{Wolf} respectively. In a previous paper~\cite{SambalePi} the present author proved \emph{Brauer's $k(B)$-Conjecture} for $\pi$-blocks of $\pi$-separable groups. This means that the number $k(B)$ of irreducible characters in a $\pi$-block $B$ is bounded by the order of its defect groups. 

In this paper we work in the opposite direction. Landau's classical theorem asserts that the order of a finite group $G$ can be bounded by a function depending only on the number of conjugacy classes of $G$.
\emph{Problem~21} on Brauer's famous list~\cite{BrauerLectures} from 1963 asks if the order of a defect group of a block $B$ of a finite group can be bounded by a function depending only on $k(B)$. 
Even today we do not know if there is such a bound for blocks with just three irreducible characters (it is expected that the defect groups have order three in this case, see \cite[Chapter~15]{habil}). On the other hand, an affirmative answer to Problem~21 for $p$-blocks of $p$-solvable groups was given by Külshammer~\cite{KLandau3}. Moreover, Külshammer--Robinson~\cite{KR} showed that a positive answer in general would follow from the Alperin--McKay Conjecture. 

The main theorem of this paper settles Problem~21 for $\pi$-blocks of $\pi$-separable groups. 

\begin{ThmA}
The order of a defect group of a $\pi$-block $B$ of a $\pi$-separable group can be bounded by a function depending only on $k(B)$.
\end{ThmA}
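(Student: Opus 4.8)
The plan is to argue by induction on $|G|$, using the Clifford theory of $\pi$-blocks to strip away normal subgroups until $G$ is reduced to a transparent, semidirect-product-like shape, from which the bound on $|D|$ is extracted by an orbit-counting argument. I would begin with the routine reductions. If $B$ covers a non-invariant block $b$ of a normal subgroup $N\trianglelefteq G$, then Slattery's Fong--Reynolds correspondence yields a block of the stabilizer $T_G(b)<G$ with the same number of irreducible characters and an isomorphic defect group, so that induction settles this case; hence I may assume that every block of a normal subgroup covered by $B$ is $G$-invariant. Since a $\pi'$-group has only blocks of trivial defect, each a single character, a $G$-invariant block of $\pcore_{\pi'}(G)$ reduces to a single $G$-invariant character, and passing to an isomorphic character triple I may further assume $\pcore_{\pi'}(G)\le\Z(G)$.

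After these reductions the structure theory of $\pi$-separable groups takes over. The $\pi$-core $V:=\pcore_\pi(G)$ is contained in every defect group, so $V\le D$, and with $\pcore_{\pi'}(G)$ central the quotient $G/V\Z(G)$ acts on $V$ with small kernel. The remaining task is to bound $|D|$ in terms of $k(B)$ when $G$ has the general shape of a semidirect product $D\rtimes E$ of the $\pi$-defect group $D$ with a $\pi'$-group $E$ acting on it. Here I would use Slattery's machinery, together with the author's previous work giving $k(B)\le|D|$, to bound $k(B)$ from below by the number of $E$-orbits on $\Irr(D)$: distinct orbits of characters of $\pi$-special type on $D$ should yield distinct irreducible characters of $B$, so that $k(B)$ dominates the orbit count.

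The crux is therefore a Landau-type statement for group actions: a $\pi'$-group $E$ acting on a $\pi$-group $D$ with only boundedly many orbits on $\Irr(D)$ must keep $|D|$ bounded. This is where the classification of finite simple groups enters, and where I expect the main obstacle to lie. Unlike Landau's original theorem, which rests on the class equation alone, here one must bound $|E|$ and $|D|$ simultaneously, and the composition factors of $E$---being $\pi'$-groups---may be arbitrary non-abelian simple groups, unconstrained by $\pi$-separability. Their contribution can only be controlled by uniform lower bounds, furnished by CFSG, on the number of orbits of a simple group on a faithful module. Assembling these bounds along a composition series of $E$, and tracking how the resulting function degrades through each Clifford reduction so that the final bound depends on $k(B)$ alone, is the delicate technical heart of the argument.
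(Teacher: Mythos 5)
There is a genuine gap, and it sits exactly where you place the ``delicate technical heart.'' The statement you isolate as the crux --- that a $\pi'$-group $E$ acting coprimely on a $\pi$-group $D$ with boundedly many orbits on $\Irr(D)$ (or on $D$) must have $|D|$ bounded --- is false: a Singer cycle $E\le\GL(n,p)$ is a $p'$-group acting on an elementary abelian $D$ of order $p^n$ with exactly two orbits, and $|D|$ is unbounded. So no orbit-counting statement of this kind, CFSG or not, can close the argument, and you have also mislocated where the classification enters: the relevant simple groups are the non-abelian composition factors of the \emph{defect group} (a $\pi$-group need not be solvable when $|\pi|\ge 2$), not those of the acting $\pi'$-group $E$, whose order never needs to be bounded. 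The paper's proof splits a minimal normal subgroup $M\le\pcore_\pi(G)$ into two cases: if $M$ is elementary abelian, orbit counting is useless and one must invoke K\"ulshammer's Landau theorem for $p$-blocks of $p$-solvable groups as a black box; if $M=S^n$ with $S$ non-abelian simple, one counts $G$-classes inside $M$ (via the block orthogonality relation and a character of $B$ not vanishing on $\pcore_\pi(G)$) and applies Kohl's CFSG theorem that $|S|$ is bounded by the number of $\Aut(S)$-orbits on $S$ --- a statement that is simply false for abelian groups, which is why the dichotomy is unavoidable.

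A second structural problem is the proposed reduction to $G=D\rtimes E$ with $D$ the full defect group. After the Fong--Reynolds and character-triple reductions (which you carry out correctly, and which match the paper), $D$ is a Hall $\pi$-subgroup of $G$, but $G$ may still have arbitrarily many alternating $\pi$/$\pi'$ layers above $\pcore_{\pi'\pi}(G)$, so no Clifford-theoretic step brings all of $D$ down to a normal subgroup. The paper instead inducts on $k(B)$ rather than on $|G|$: it bounds $|D/M|$ by induction applied to a block of $G/M$ contained in $B$ (the key point being that block kernels are $\pi'$-groups, so $k$ strictly drops), then passes to the normal core $K$ of the preimage of a Hall $\pi'$-subgroup of $G/M$, with $|G:K|\le|D/M|!$ and $k(b)\le|G:K|k(B)$ for a covered block $b$ of $K$ whose defect group is exactly $M$; only then is $|M|$ bounded by the two-case analysis above. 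Without this bookkeeping --- in particular without the strict drop in $k$ --- an induction on $|G|$ does not produce a bound depending on $k(B)$ alone.
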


Since $\{p\}$-separable groups are $p$-solvable and $\{p\}$-blocks are $p$-blocks, this generalizes Külshammer's result.
If $G$ is an arbitrary finite group and $\pi$ is the set of prime divisors of $|G|$, then $G$ is $\pi$-separable and $\Irr(G)$ is a $\pi$-block with defect group $G$ (see \autoref{facts} below). Hence, Theorem~A also implies Landau's Theorem mentioned above.

Külshammer's proof relies on the classification of finite simple groups and so does our proof.
Although it is possible to extract from the proof an explicit bound on the order of a defect group, this bound is far from being optimal. With some effort we obtain the following small values.

\begin{ThmB}
Let $B$ be a $\pi$-block of a $\pi$-separable group with defect group $D$. Then
\begin{align*}
k(B)=1&\Longleftrightarrow D=1,\\
k(B)=2&\Longleftrightarrow D=C_2,\\
k(B)=3&\Longleftrightarrow D\in\{C_3,S_3\}
\end{align*}
where $C_n$ denotes the cyclic group of order $n$ and $S_n$ is the symmetric group of degree $n$.
\end{ThmB}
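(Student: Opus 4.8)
My plan is to prove each equivalence in both directions, where the implications ``defect group $\Rightarrow k(B)$'' are settled by exhibiting blocks, while the converse implications carry the real weight. The universal tool is the bound $k(B)\le|D|$ coming from Brauer's $k(B)$-Conjecture for $\pi$-blocks \cite{SambalePi}, which immediately yields $|D|\ge k(B)$ and so reduces every statement to identifying $D$ among finitely many groups. To go in the converse direction I would compute $k(B)$ from the local structure: combining a Fong--Reynolds reduction for $\pi$-blocks with an induction on $|G|$ (the machinery already assembled for Theorem~A), one reduces to an inertially primitive block for which $k(B)$ equals the class number $k(D\rtimes E)$, where $E$ is a $\pi'$-group acting coprimely on the $\pi$-group $D$ as the inertial quotient. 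Because the action is coprime, every $\theta\in\Irr(D)$ extends to its inertia group, and Gallagher's theorem gives the clean formula $k(B)=\sum_{\theta}k(E_\theta)$, the sum running over $E$-orbit representatives $\theta$ on $\Irr(D)$ with stabilisers $E_\theta\le E$.

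For the easy directions I would first list the realising blocks: for $G\in\{1,C_2,C_3,S_3\}$ and $\pi$ the set of primes dividing $|G|$, the whole of $\Irr(G)$ is one $\pi$-block with defect group $G$, giving $k(B)=1,2,3,3$; the value $k(B)=3$ for $D=C_3$ is also produced by $E=C_2$ acting by inversion. The implication $k(B)=1\Rightarrow D=1$ I would treat by the defect-zero criterion: $k(B)=1$ forces $l(B)=1$ and a single decomposition number equal to $1$, so the Cartan matrix of $B$ is $[1]$; as its elementary divisor equals $|D|$ (the $\pi$-analogue of Brauer's theorem), one gets $D=1$.

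For the converse implications with $k(B)\in\{2,3\}$ I would substitute into $k(B)=\sum_{\theta}k(E_\theta)$. The orbit of the trivial character contributes $k(E)$, so when $E=1$ the block is nilpotent and $k(B)=k(D)$ is simply the class number of $D$; the classical list of groups with at most three conjugacy classes then gives $D=C_2$ and $D\in\{C_3,S_3\}$ for $k(B)=2,3$ respectively. If $E\ne1$ then $k(E)\ge2$ and, as $D\ne1$ forces a further orbit of contribution at least $1$, the total is at least $3$; hence $E\ne1$ is impossible for $k(B)=2$, while for $k(B)=3$ equality pins down $E=C_2$ acting by inversion on $D=C_3$, returning $D=C_3$. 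Coprimality is exactly what closes the list: it forbids, for instance, $E=C_2$ from acting on $D=S_3$, so $S_3$ can occur only with $E=1$.

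The principal obstacle, as I see it, is establishing the reduction to the inertial formula itself, especially for non-principal blocks and for a genuinely nonabelian defect group such as $S_3$ (which arises here as a nilpotent block). Making this precise needs the $\pi$-block versions of the Fong--Reynolds correspondence and of Clifford theory over normal subgroups, drawn from Slattery's main theorems \cite{Slattery} and Wolf's Alperin--McKay theorem \cite{Wolf}, together with tight control of the coprime action of $E$ on $D$. Once this machinery is in place, the residual step---enumerating all pairs $(D,E)$ with $\gcd(|D|,|E|)=1$ and $\sum_{\theta}k(E_\theta)\le3$---is an elementary finite computation.
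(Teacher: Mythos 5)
Your argument hinges on the claim that, after Fong--Reynolds and a character-triple reduction, $k(B)$ equals the class number of a semidirect product $D\rtimes E$ with $E$ a $\pi'$-group acting coprimely on $D$, so that $k(B)=\sum_\theta k(E_\theta)$. This is not a theorem, and it is in fact false. Take $\pi=\{2\}$ and $G=\GL(2,3)$, which is $2$-solvable with $\pcore_{2'}(G)=1$; then $\Irr(G)$ is the unique $2$-block $B$, with $k(B)=8$ and defect group $D$ a semidihedral group of order $16$. Since $D$ is a self-normalizing Sylow $2$-subgroup, the inertial quotient is $E=1$, and your formula would give $k(B)=k(D)=7$. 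The obstruction is structural: after the standard reductions one only knows that $G/\pcore_{\pi'}(G)$ has a self-centralizing normal $\pi$-subgroup $P=\pcore_\pi(G)$ (Hall--Higman); the group is an iterated extension which is in general not a semidirect product of a Hall $\pi$-subgroup by a $\pi'$-group. This is precisely why the paper's proof of the implication $k(B)=3\Rightarrow D\in\{C_3,S_3\}$ must work through the chain $\pcore_{\pi'\pi}(G)=N\times P$: it bounds $|D|_2$ using Brandt's theorem and \cite[Proposition~1.31]{habil}, passes to $G/P$ via \autoref{lemquot} to get $|D/P|\le 2$, bounds the number of $\Aut(P)$-orbits on $P$ via \autoref{lemnormal}, and classifies the possible $P$ via Laffey--MacHale --- rather than reading $k(B)$ off from $D$ and an inertial quotient.

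Three further points. First, you invert the logic of the $k(B)$-conjecture: $k(B)\le|D|$ yields the lower bound $|D|\ge k(B)$, which does not ``reduce every statement to identifying $D$ among finitely many groups''; an upper bound on $|D|$ in terms of $k(B)$ is exactly the hard content of Theorems A and B. (The inequality is genuinely used, but only for the converse directions, e.g.\ $D=C_2\Rightarrow k(B)\le 2$.) Second, your treatment of $k(B)=1$ via the Cartan matrix and its elementary divisors presupposes a decomposition/Cartan theory for $\pi$-blocks that is not available in this setting; the paper instead observes that a singleton $\pi$-block is a $p$-block of defect zero for every $p\in\pi$, whence $D=1$ by \autoref{facts}\eqref{f2}. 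Third, exhibiting $G=S_3$ realizes $k(B)=3$ with $D=S_3$, but the implication $D\in\{C_3,S_3\}\Rightarrow k(B)=3$ must hold for \emph{every} such block; the paper obtains the required upper bound $k(B)\le 3$ from Gallagher's inequality $k(B)\le k(G/N)$ together with Hall--Higman forcing $G/N\le S_3$, a step for which your proposal has no substitute once the inertial formula is withdrawn.
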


\section{Notation}

Most of our notation is standard and can be found in Navarro's book~\cite{Navarro}.
For the convenience of the reader we collect definitions and crucial facts about $\pi$-blocks. 
In the following, $\pi$ is any set of prime numbers. We denote the $\pi$-part of an integer $n$ by $n_\pi$.
A finite group $G$ is called $\pi$-\emph{separable} if there exists a normal series 
\[1=N_0\unlhd\ldots\unlhd N_k=G\] 
such that each quotient $N_i/N_{i-1}$ is a $\pi$-group or a $\pi'$-group. The largest normal $\pi$-subgroup of $G$ is denoted by $\pcore_{\pi}(G)$.

\begin{Def}\hfill
\begin{itemize}
\item A $\pi$-\emph{block} of $G$ is a minimal non-empty subset $B\subseteq\Irr(G)$ such that $B$ is a union of $p$-blocks for every $p\in\pi$ (see \cite[Definition (1.12) and Theorem (2.15)]{Slattery}). In particular, the $\{p\}$-blocks of $G$ are the $p$-blocks of $G$. In accordance with the notation for $p$-blocks we set $k(B):=|B|$ for every $\pi$-block $B$.

\item A \emph{defect group} $D$ of a $\pi$-block $B$ of a $\pi$-separable group $G$ is defined inductively as follows (see \cite[Definition (2.2)]{Slattery2}). Let $\chi\in B$ and let $\lambda\in\Irr(\pcore_{\pi'}(G))$ be a constituent of the restriction $\chi_{\pcore_{\pi'}(G)}$ (we say that $B$ \emph{lies over} $\lambda$). Let $G_\lambda$ be the inertial group of $\lambda$ in $G$. If $G_\lambda=G$, then $D$ is a Hall $\pi$-subgroup of $G$ (such subgroups always exist in $\pi$-separable groups). 
Otherwise there exists a unique $\pi$-block $b$ of $G_\lambda$ lying over $\lambda$ such that $\psi^G\in B$ for any $\psi\in b$ (see \autoref{FR} below). In this case we identify $D$ with a defect group of $b$. 
As usual, the defect groups of $B$ form a conjugacy classes of $G$.
It was shown in \cite[Theorem (2.1)]{Slattery2} that this definition agrees with the usual definition for $p$-blocks.

\item A $\pi$-block $B$ of $G$ \emph{covers} a $\pi$-block $b$ of $N\unlhd G$, if there exist $\chi\in B$ and $\psi\in b$ such that $[\chi_N,\psi]\ne 0$ (see \cite[Definition (2.5)]{Slattery}). 
\end{itemize}
\end{Def}

\begin{Prop}\label{facts}
For every $\pi$-block $B$ of a $\pi$-separable group $G$ with defect group $D$ the following holds:
\begin{enumerate}[(i)]
\item\label{f1} $\pcore_{\pi}(G)\le D$.
\item\label{f2} For every $\chi\in B$ we have $\frac{|D|\chi(1)_\pi}{|G|_\pi}\in\NN$ and for some $\chi$ this fraction equals $1$.
\item\label{f3} If the $\pi$-elements $g,h\in G$ are not conjugate, then
\[\sum_{\chi\in B}\chi(g)\overline{\chi(h)}=0.\]
\item\label{f4} If $B$ covers a $\pi$-block $b$ of $N\unlhd G$, then for every $\psi\in b$ there exists some $\chi\in B$ such that $[\chi_N,\psi]\ne 0$.
\item\label{f5} If $B$ lies over a $G$-invariant $\lambda\in\Irr(\pcore_{\pi'}(G))$, then $B=\Irr(G|\lambda)$.
\end{enumerate}
\end{Prop}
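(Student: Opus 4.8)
The proofs of these five assertions are mostly a matter of combining results already in the literature with the inductive structure built into the definition of the defect group. The common thread is induction on $|G|$: whenever $\lambda$ is $G$-invariant we are in the base case, where $D$ is a Hall $\pi$-subgroup and, by (v), $B=\Irr(G|\lambda)$; otherwise the Fong--Reynolds correspondence (\autoref{FR}) carries everything to the proper subgroup $G_\lambda$, whose block $b$ has the same defect group $D$ and satisfies $\psi^G\in B$ for $\psi\in b$. A second recurring device is the defining property that a $\pi$-block is a union of $p$-blocks for each $p\in\pi$, which lets us deduce several parts from their classical $p$-block counterparts in \cite{Navarro}.

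I would establish (v) first, since it secures the base case. For $G$-invariant $\lambda\in\Irr(\pcore_{\pi'}(G))$ and $p\in\pi$, the group $\pcore_{\pi'}(G)$ is a $p'$-group, so $\lambda$ is its own $p$-block; by the Clifford theory of $p$-blocks over a normal subgroup, every $p$-block of $G$ covering $\{\lambda\}$ consists of characters lying over $\lambda$, and conversely every character of $\Irr(G|\lambda)$ lies in such a $p$-block. Hence $\Irr(G|\lambda)$ is a union of $p$-blocks for every $p\in\pi$, so it contains a $\pi$-block, and any $\pi$-block over $\lambda$ is contained in it. The one substantive point is that there is a single such $\pi$-block, i.e.\ that $\Irr(G|\lambda)$ does not split; this is the heart of the matter and I would take it from Slattery's construction in \cite{Slattery}. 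For (i) I argue by induction: a Hall $\pi$-subgroup contains every normal $\pi$-subgroup, so $\pcore_\pi(G)\le D$ in the base case, while in general $[\pcore_\pi(G),\pcore_{\pi'}(G)]=1$ forces $\pcore_\pi(G)\le G_\lambda$, whence $\pcore_\pi(G)\le\pcore_\pi(G_\lambda)\le D$ by the inductive hypothesis applied to $b$.

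Assertion (ii) is the height inequality and should follow from Slattery's defect theory in \cite{Slattery2}: the degree relation makes $\frac{|D|\chi(1)_\pi}{|G|_\pi}$ a positive $\pi$-integer, and it equals $1$ exactly for the height-zero characters of $B$. In the base case $|D|=|G|_\pi$, so the claim reduces to the existence of a character of $\pi'$-degree over $\lambda$, which holds in the $\pi$-separable setting. For (iv) I would invoke the classical fact that a $p$-block of $G$ covering a $p$-block of $N\unlhd G$ meets every character of the covered block, and then assemble these statements across $p\in\pi$ using the union-of-$p$-blocks description of $B$ and $b$. For (iii) I would use the $\pi$-section refinement of the second orthogonality relation: non-conjugate $\pi$-elements lie in different $\pi$-sections, and the corresponding block-wise orthogonality is available from Slattery's analogues of Brauer's main theorems and from \cite{SambalePi}.

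I expect the genuine obstacles to be two. The first is the uniqueness claim inside (v): showing that $\Irr(G|\lambda)$ cannot be partitioned into two proper subsets each of which is simultaneously a union of $p$-blocks for all $p\in\pi$; this connectivity across the different primes is exactly what distinguishes $\pi$-blocks from ordinary $p$-blocks. The second is the existence of a height-zero (equivalently, $\pi'$-degree) character needed for the equality in (ii). Both are places where the $\pi$-separability hypothesis, and Slattery's foundational results, must be used in an essential way.
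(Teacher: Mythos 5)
Your proposal is correct and matches the paper's treatment: the paper proves all five parts purely by citation (Slattery's papers for (i), (ii), (iv), (v) and Robinson's \emph{Group algebras over semi-local rings} for (iii)), and the substantive points you isolate --- the indecomposability of $\Irr(G|\lambda)$ in (v), the existence of a $\pi'$-degree character in (ii), and the block orthogonality in (iii) --- are exactly the ones the paper delegates to those sources. The only discrepancy is minor: for (iii) the paper's reference is Robinson's Corollary~8 rather than Slattery's main theorems or \cite{SambalePi}, while your derivations of (i) via induction through Fong--Reynolds and of (iv) via the $p$-blockwise covering argument are sound and consistent with how those cited results are proved.
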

\begin{proof}\hfill
\begin{enumerate}[(i)]
\item See \cite[Lemma~(2.3)]{Slattery2}.
\item See \cite[Theorems (2.5) and (2.15)]{Slattery2}.
\item This follows from \cite[Corollary~8]{Robinsonpi} (by \cite[Remarks on p. 410]{Robinsonpi}, $B$ is really a $\pi$-block in the sense of that paper). 
\item See \cite[Lemma (2.4)]{Slattery}.
\item See \cite[Theorem~(2.8)]{Slattery}.\qedhere
\end{enumerate}
\end{proof}

The following result allows inductive arguments (see \cite[Theorem~2.10]{Slattery} and \cite[Corollary~2.8]{Slattery2}).

\begin{Thm}[Fong--Reynolds Theorem for $\pi$-blocks]\label{FR}
Let $N$ be a normal $\pi'$-subgroup of a $\pi$-separable group $G$. Let $\lambda\in\Irr(N)$ with inertial group $G_\lambda$. Then the induction of characters induces a bijection $b\mapsto b^G$ between the $\pi$-blocks of $G_\lambda$ lying over $\lambda$ and the $\pi$-blocks of $G$ lying over $\lambda$. Moreover, $k(b)=k(b^G)$ and every defect group of $b$ is a defect group of $b^G$.
\end{Thm}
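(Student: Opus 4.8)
The plan is to deduce the statement from the ordinary Clifford correspondence combined with the classical Fong--Reynolds theorem for $p$-blocks applied to each prime $p\in\pi$ separately, and then to read off the defect groups from Slattery's inductive definition. Throughout write $T:=G_\lambda$ and, for a subgroup $N\le H\le G$, let $\Irr(H\mid\lambda)$ denote the irreducible characters of $H$ whose restriction to $N$ involves $\lambda$. First I would recall the Clifford correspondence: since $N\unlhd G$ and $T$ is the inertial group of $\lambda$, induction $\psi\mapsto\psi^G$ is a bijection from $\Irr(T\mid\lambda)$ onto $\Irr(G\mid\lambda)$.

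Because $N$ is a $\pi'$-group, it is a $p'$-group for every $p\in\pi$, so $\{\lambda\}$ is a single $p$-block of $N$ (of defect zero) whose stabiliser is exactly $T$; in particular $\{\lambda\}$ is a $\pi$-block of $N$. A $p$-block of $G$ covers a single $G$-orbit of $p$-blocks of $N$, and a character of $G$ lies over $\lambda$ precisely when it lies over the whole $G$-orbit of $\lambda$; hence $\Irr(G\mid\lambda)$ is exactly the union of those $p$-blocks of $G$ that cover $\{\lambda\}$, and likewise $\Irr(T\mid\lambda)$ is the union of the $p$-blocks of $T$ covering $\{\lambda\}$. Consequently both $\Irr(G\mid\lambda)$ and $\Irr(T\mid\lambda)$ are unions of $\pi$-blocks. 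Now I would fix $p\in\pi$ and invoke the classical Fong--Reynolds theorem: block induction is a bijection between the $p$-blocks of $T$ covering $\{\lambda\}$ and the $p$-blocks of $G$ covering $\{\lambda\}$, compatible with the Clifford correspondence in the sense that $\psi\in\Irr(T\mid\lambda)$ lies in a $p$-block $\beta$ of $T$ if and only if $\psi^G$ lies in $\beta^G$. Thus for $\psi,\psi'\in\Irr(T\mid\lambda)$ we have that $\psi,\psi'$ share a $p$-block of $T$ exactly when $\psi^G,\psi'^G$ share a $p$-block of $G$.

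To pass from $p$-blocks to $\pi$-blocks I would use the observation that the $\pi$-blocks of a group are the connected components of the graph on its irreducible characters in which two characters are joined whenever they share a $p$-block for some $p\in\pi$ (this is just a reformulation of ``minimal nonempty union of $p$-blocks for all $p$''). Superimposing the equivalence from the previous paragraph over all $p\in\pi$ shows that the Clifford correspondence is an isomorphism between the corresponding graphs on $\Irr(T\mid\lambda)$ and $\Irr(G\mid\lambda)$, and therefore carries connected components to connected components. This produces the desired bijection $b\mapsto b^G=\{\psi^G:\psi\in b\}$ between the $\pi$-blocks of $T$ lying over $\lambda$ and the $\pi$-blocks of $G$ lying over $\lambda$, and since this map is a restriction of the injective Clifford correspondence we immediately obtain $k(b)=|b|=|b^G|=k(b^G)$.

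The remaining claim, that every defect group of $b$ is a defect group of $b^G$, is the step I expect to be the main obstacle, because defect groups of $\pi$-blocks are defined only inductively through reduction over $\pcore_{\pi'}$. Here I would argue by induction on $|G|$. If $T=G$, then $\lambda$ is $G$-invariant, $b=b^G=\Irr(G\mid\lambda)$ is the unique $\pi$-block over $\lambda$ by \autoref{facts}\eqref{f5}, and there is nothing to prove. If $T<G$, I would exploit that $N\le\pcore_{\pi'}(G)=:M$ and establish a transitivity of the Fong--Reynolds reduction: computing a defect group of $b^G$ via Slattery's definition amounts to passing from $\lambda$ to a constituent $\mu\in\Irr(M)$ below a character of $b^G$ and then to its inertial group, and one checks that, after replacing $\lambda$ by a suitable $G$-conjugate so that the relevant inertial subgroups nest correctly, the \emph{same} $\mu$ governs the reduction of $b$ inside $T$. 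The technical heart is to verify that the choices of $\mu$ and of the intermediate $\pi$-block match on both sides, so that the uniqueness guaranteed by \autoref{FR} at the smaller level forces $b$ and $b^G$ to reduce to one and the same Fong--Reynolds correspondent over $M$, whose defect group is by definition a common defect group of $b$ and of $b^G$.
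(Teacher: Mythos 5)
First, note that the paper does not prove this statement at all: it is quoted from Slattery (\cite[Theorem~2.10]{Slattery} and \cite[Corollary~2.8]{Slattery2}), so there is no internal proof to compare against and your attempt has to stand on its own.

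The first two thirds of your argument do stand. Reducing to the classical Fong--Reynolds theorem one prime at a time, using that $\{\lambda\}$ is a $p$-block of defect zero of $N$ with stabilizer $G_\lambda$ for every $p\in\pi$, and then passing to $\pi$-blocks via the connected-component description of ``minimal non-empty union of $p$-blocks for all $p\in\pi$'' is exactly the right mechanism; the compatibility of block induction with the Clifford correspondence gives the graph isomorphism, and the bijection $b\mapsto b^G$ together with $k(b)=k(b^G)$ follows. (One small point worth making explicit: a $\pi$-block of $G_\lambda$ or of $G$ that meets $\Irr(\cdot\mid\lambda)$ is entirely contained in it, because each of its $p$-blocks then covers $\{\lambda\}$; you need this to know that connected components of the induced subgraphs really are $\pi$-blocks.)

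The defect-group statement, however, is left as a genuine gap, and the sketch you give would not go through as written. When $N<\pcore_{\pi'}(G)=:M$, Slattery's recursion for a defect group of $b^G$ runs over a constituent $\mu\in\Irr(M)$ and its inertial group $G_\mu$, whereas the recursion for a defect group of $b$ runs over $\pcore_{\pi'}(G_\lambda)$, which in general is strictly larger than $M\cap G_\lambda$; moreover $M\not\le G_\lambda$ and $G_\mu\not\le G_\lambda$ in general (an element fixing $\mu$ only permutes the constituents of $\mu_N$, so it moves $\lambda$ within its $M$-orbit, giving only $G_\mu\le G_\lambda M$). So the assertion that ``the same $\mu$ governs the reduction of $b$ inside $T$'' is not even well-formed: the two reductions take place over different normal $\pi'$-subgroups and over incomparable inertial groups. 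The standard fix is to work in $G_\lambda\cap G_\mu$ and establish a commuting square of Fong--Reynolds correspondences (transitivity), which is precisely the content of \cite[Corollary~2.8]{Slattery2}; this is the technical heart you name but do not carry out. Only the special case $N=M$ is immediate, since there the claim reduces to the uniqueness statement of the correspondence together with the definition of the defect group.
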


Finally we recall $\pi$-special characters which were introduced by Gajendragadkar~\cite{Gajendragadkar}. A character $\chi\in\Irr(G)$ is called $\pi$-\emph{special}, if $\chi(1)=\chi(1)_\pi$ and for every subnormal subgroup $N$ of $G$ and every irreducible constituent $\phi$ of $\chi_N$ the order of the linear character $\det\phi$ is a $\pi$-number. 

Obviously, every character of a $\pi$-group is $\pi$-special.
If $\chi\in\Irr(G)$ is $\pi$-special and $N:=\pcore_{\pi'}(G)$, then $\chi_N$ is a sum of $G$-conjugates of a linear character $\lambda\in\Irr(N)$ by Clifford theory. Since the order of $\det\lambda=\lambda$ is a $\pi$-number and divides $|N|$, we obtain $\lambda=1_N$. This shows that $N\le\Ker(\chi)$.

\section{Proofs}

At some point in the proof of Theorem~A we need to refer to Külshammer's solution~\cite[Theorem]{KLandau3} of Brauer's Problem 21 for $p$-solvable groups:

\begin{Prop}\label{K}
There exists a monotonic function $\alpha:\NN\to\NN$ with the following property: For every $p$-block $B$ of a $p$-solvable group with defect group $D$ we have $|D|\le \alpha(k(B))$. 
\end{Prop}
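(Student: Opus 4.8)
The plan is to read this proposition off from Külshammer's theorem and then to upgrade the bounding function to a monotonic one by an entirely formal device. The substantive statement---that for $p$-solvable groups the order of a defect group of a $p$-block $B$ is bounded by a quantity depending only on $k(B)$, uniformly in the prime $p$ and in the group---is precisely the content of \cite[Theorem]{KLandau3}. I would therefore begin by invoking that theorem to fix a function $f\colon\NN\to\NN$ with $|D|\le f(k(B))$ for every $p$-block $B$ of every $p$-solvable group, where $D$ is a defect group of $B$. The key point to record is that the bound there is uniform in $p$, which is exactly what the quantification in the statement (over all primes and all $p$-solvable groups at once) demands.

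The second and only remaining step is to arrange that the bounding function be non-decreasing. Given $f$ as above, I would set
\[
\alpha(k):=\max\{f(1),f(2),\ldots,f(k)\}\qquad(k\in\NN),
\]
which is manifestly monotonic and takes values in $\NN$. Since $f(k)\le\alpha(k)$ for all $k$, every block $B$ as in the statement satisfies $|D|\le f(k(B))\le\alpha(k(B))$, so $\alpha$ has the required property.

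The only genuine obstacle is thus the cited theorem itself, which rests on the classification of finite simple groups and which I would treat as a black box here; no new representation-theoretic input is needed to pass from it to the proposition. Monotonicity is requested not for the statement's own sake but because the inductive proof of Theorem~A will compare the bound at different arguments, and having $\alpha$ order-preserving lets one replace $\alpha(j)$ by $\alpha(k)$ freely whenever $j\le k$, which streamlines those estimates.
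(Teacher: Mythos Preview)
Your proposal is correct and matches the paper's treatment exactly: the paper simply cites \cite[Theorem]{KLandau3} as the content of the proposition, with no further argument given. Your additional remark that the function can be made monotonic by taking running maxima is the standard formal device, and is indeed the only thing to add beyond the citation.
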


The following ingredient is a direct consequence of the classification of finite simple groups.

\begin{Prop}[{\cite[Theorem~2.1]{Kohl}}]\label{kohl}
There exists a monotonic function $\beta:\NN\to\NN$ with the following property: If $G$ is a finite non-abelian simple group such that $\Aut(G)$ has exactly $k$ orbits on $G$, then $|G|\le \beta(k)$.
\end{Prop}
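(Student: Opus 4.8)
The plan is to reduce the assertion to a statement about conjugacy classes and then to invoke the classification of finite simple groups (CFSG) family by family. First I would note that the orbits of $\Aut(G)$ on $G$ are precisely the orbits of $\Out(G)=\Aut(G)/\Inn(G)$ on the set of conjugacy classes of $G$: the inner automorphisms fuse each class to a point, and the residual action on classes is that of $\Out(G)$. Writing $h(G)$ for the number of conjugacy classes, every $\Out(G)$-orbit of classes has size at most $|\Out(G)|$, so the number $k$ of $\Aut(G)$-orbits satisfies
\[
k\ge\frac{h(G)}{|\Out(G)|},\qquad\text{i.e.}\qquad h(G)\le k\,|\Out(G)|.
\]
Hence it is enough to show that $h(G)/|\Out(G)|\to\infty$ as $|G|\to\infty$ over the non-abelian simple groups; for each $k$ only finitely many such $G$ then satisfy $h(G)\le k\,|\Out(G)|$, and taking $\beta(k)$ to be the largest order occurring (and replacing $\beta$ by $n\mapsto\max_{j\le n}\beta(j)$ to enforce monotonicity) yields the desired function.

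It remains to run through the CFSG list. The $26$ sporadic groups contribute only finitely many values and may be discarded. For the alternating groups $A_n$ with $n\ge5$ we have $|\Out(A_n)|\le4$, while $h(A_n)\to\infty$, so $h(A_n)/|\Out(A_n)|\to\infty$. The substantial case is that of the (possibly twisted) groups of Lie type. Here I would parametrise $G=G(q)$ by its Lie rank $r$ and the underlying field $\FF_q$ with $q=p^a$, and combine two standard structural inputs: the upper bound $|\Out(G)|\le C\,r\,a$ for an absolute constant $C$ (the diagonal automorphisms have order dividing that of the fundamental group and hence bounded linearly in $r$, the graph automorphisms contribute a factor at most $3$, and the field automorphisms contribute the factor $a=\log_p q$), together with the classical fact that $h(G(q))$ is, for fixed type, a polynomial in $q$ of degree equal to $r$, whence $h(G)\ge c\,q^r$ for an absolute constant $c>0$ once $q^r$ is large.

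Combining the two inputs gives $h(G)/|\Out(G)|\ge c\,q^r/(C\,r\log_p q)$. Since $q\ge2$, this tends to infinity whenever $q^r\to\infty$: if the rank $r$ grows then $q^r$ grows at least exponentially in $r$ while $r\log_p q$ grows only linearly, and if the field grows then $q^r$ dominates $\log q$. As $|G|\to\infty$ forces $q^r\to\infty$, the Lie type case follows, completing the argument. The main obstacle is exactly this uniform treatment of the groups of Lie type: one must verify the displayed bound on $|\Out(G)|$ and the lower bound $h(G)\ge c\,q^r$ across all untwisted and twisted families, where both the diagonal part of the outer automorphism group and the precise class count depend on the type and the isogeny class of $G$. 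The alternating and sporadic contributions, by contrast, are routine.
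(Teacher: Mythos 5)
The paper offers no proof of this proposition at all: it is imported verbatim from Kohl \cite[Theorem~2.1]{Kohl}, so there is no internal argument to measure yours against. Judged on its own terms, your sketch follows the standard route to such statements and is essentially sound. The reduction is correct: the $\Aut(G)$-orbits on $G$ are in natural bijection with the $\Out(G)$-orbits on the set of conjugacy classes, so $k\ge h(G)/\lvert\Out(G)\rvert$, and it suffices to show that $h(G)/\lvert\Out(G)\rvert\to\infty$ over the non-abelian simple groups; monotonicity of $\beta$ is then arranged exactly as you describe. The alternating and sporadic cases are indeed routine. For the groups of Lie type, the two inputs you isolate are genuine standard facts: the $dfg$ structure of $\Out(G)$ with $d\le r+1$, $g\le 6$ and $f$ bounded by a constant times $a$, giving $\lvert\Out(G)\rvert\le Cra$; and Steinberg's count of exactly $q^{r}$ semisimple classes in the simply connected cover, which yields $h(G)\ge q^{r}/\lvert\Z(\widehat{G})\rvert\ge q^{r}/(r+1)$ after passing to the simple quotient. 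Note that this last form is the correct uniform statement: your claim of an \emph{absolute} constant $c$ with $h(G)\ge cq^{r}$ is not quite right for type $A_{r}$ as $r\to\infty$, where the natural constant degrades like $1/(r+1)$; but a loss that is polynomial in $r$ is harmless against the exponential growth of $q^{r}$, so your final limit computation survives unchanged. What would still need to be written out in full is the family-by-family verification you yourself flag, including the twisted and Suzuki--Ree families, where both the diagonal outer automorphisms and the class counts must be checked case by case. As a proof sketch, however, the architecture is correct and matches how results of this kind, including Kohl's, are actually proved.
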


In the following series of lemmas, $\pi$ is a fixed set of primes, $G$ is a $\pi$-separable group and $B$ is a $\pi$-block of $G$ with defect group $D$. \autoref{facts}\eqref{f2} guarantees the existence of a height $0$ character in $B$. We need to impose an additional condition on such a character.

\begin{Lem}\label{lemchar}
There exists some $\chi\in B$ such that $\pcore_{\pi}(G)\le\Ker(\chi)$ and $|D|\chi(1)_\pi=|G|_\pi$.
\end{Lem}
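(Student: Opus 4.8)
The plan is to exhibit a character in $B$ that is simultaneously of height zero and trivial on $\pcore_\pi(G)$. I would set $P:=\pcore_\pi(G)$ and recall from \autoref{facts}\eqref{f2} that some $\chi\in B$ already satisfies $|D|\chi(1)_\pi=|G|_\pi$, so the genuine content is arranging, after passing to a suitable reduction, that $P$ lies in the kernel. The natural device is to work in $G/P$: since $P$ is a normal $\pi$-subgroup, the characters of $G$ with $P$ in their kernel are precisely $\Irr(G/P)$, and one expects the $\pi$-block structure to descend to $G/P$ in a controlled way. So the first step is to understand how $B$ relates to the $\pi$-blocks of $\overline G:=G/P$.

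First I would reduce to the case where $B$ lies over a $G$-invariant character $\lambda\in\Irr(\pcore_{\pi'}(G))$. Using the Fong--Reynolds theorem (\autoref{FR}) with $N:=\pcore_{\pi'}(G)$, the block $B$ corresponds to a block $b$ of the inertial group $G_\lambda$ with $k(b)=k(B)$ and the same defect groups; moreover $\pcore_\pi(G)\le\pcore_\pi(G_\lambda)$ should hold, so a character of $b$ trivial on $\pcore_\pi(G_\lambda)$ is in particular trivial on $\pcore_\pi(G)$ and induces up appropriately. Thus I may assume $\lambda$ is $G$-invariant, in which case \autoref{facts}\eqref{f5} gives the clean description $B=\Irr(G\mid\lambda)$. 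This is the configuration in which I can hope to produce the desired character explicitly.

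In this reduced situation I would invoke the theory of $\pi$-special characters. Because $\lambda$ is $G$-invariant and linear, Gajendragadkar's factorization (via the work of Isaacs underlying Slattery's setup) lets one write any $\chi\in B=\Irr(G\mid\lambda)$ as a product $\chi=\chi_\pi\chi_{\pi'}$ of a $\pi$-special and a $\pi'$-special character, with $\chi(1)_\pi=\chi_\pi(1)$. The key observation recorded just before Section~3 is that every $\pi$-special character has $\pcore_{\pi'}(G)$ in its kernel; dually, a $\pi'$-special character of a $\pi$-separable group has $\pcore_\pi(G)$ in its kernel, and restricts to a multiple of the $G$-invariant $\lambda$ on $\pcore_{\pi'}(G)$. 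I would choose $\chi$ to be the height-zero character from \autoref{facts}\eqref{f2} and examine its factorization: the condition $|D|\chi(1)_\pi=|G|_\pi$ forces the $\pi$-special factor $\chi_\pi$ to have full $\pi$-degree, which for a $\pi$-special character of a $\pi$-separable group means it is a character of the $\pi'$-residue and hence is itself inflated from $G/P$; combining this with the vanishing of the $\pi'$-special factor on $P$ gives $P\le\Ker(\chi)$.

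The main obstacle I anticipate is the last step: showing that the height-zero condition actually forces the $\pi$-special factor to be trivial on $P=\pcore_\pi(G)$, rather than merely controlling its degree. The degree equation $|D|\chi(1)_\pi=|G|_\pi$ together with $\pcore_\pi(G)\le D$ from \autoref{facts}\eqref{f1} constrains $\chi(1)_\pi=|G:D|_\pi\cdot\lvert\pcore_\pi(G)\rvert^{-1}\cdot\ldots$, and I would need the precise relationship (due to Isaacs--Navarro in the $\pi$-special setting) between $\pi$-special characters lying over the trivial character of a normal $\pi$-subgroup and their degrees. I expect to need a nucleus or Navarro vertex argument to pin down that a $\pi$-special character of maximal $\pi$-degree in $\Irr(G\mid\lambda)$ is inflated from $G/\pcore_\pi(G)$; once that factorization is in hand the conclusion $P\le\Ker(\chi)$ and $|D|\chi(1)_\pi=|G|_\pi$ follows immediately.
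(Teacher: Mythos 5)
Your reduction to the case of a $G$-invariant $\lambda\in\Irr(\pcore_{\pi'}(G))$ is sound and is essentially what the paper does, organized there as an induction on $|G|$ (a single application of \autoref{FR} is not quite enough, since the character of $\pcore_{\pi'}(G_\lambda)$ under the Fong--Reynolds correspondent need not be invariant in $G_\lambda$; iterating, i.e.\ inducting, repairs this, and your observations that $\pcore_\pi(G)\le\pcore_\pi(G_\lambda)$ and that kernels and $\pi$-parts of degrees behave well under induction are exactly the points the paper checks). The genuine gap is in the invariant case itself. There you propose to take the height-zero character supplied by \autoref{facts}\eqref{f2} and to extract $\pcore_\pi(G)\le\Ker(\chi)$ from its $\pi$-special/$\pi'$-special factorization. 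This cannot work as stated, for two reasons. First, not every member of $\Irr(G\mid\lambda)$ is $\pi$-factorable, so the factorization you start from need not exist. Second, and fatally, the implication you are trying to force is false: in the invariant case $D$ is a Hall $\pi$-subgroup, so the degree condition only says $\chi(1)_\pi=1$, and a character of $\pi'$-degree may perfectly well have a nontrivial linear $\pi$-special factor that is faithful on $\pcore_\pi(G)$. Already $G=C_3$ with $\pi=\{3\}$ is a counterexample: $B=\Irr(C_3)$ is a single $\pi$-block with $D=C_3$, all three characters satisfy $|D|\chi(1)_\pi=|G|_\pi$ and are $\pi$-special, but only the trivial one contains $\pcore_\pi(G)=C_3$ in its kernel. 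No nucleus or Navarro-vertex argument can rescue a choice of $\chi$ made only by its height; the obstacle you flag at the end is not a technical difficulty but a false statement.

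The missing idea is that the character must be selected by its $\pi'$-\emph{speciality}, not by its degree. Since $\lambda$ is a character of a $\pi'$-group it is $\pi'$-special, and because it is $G$-invariant, $B=\Irr(G\mid\lambda)$ contains a $\pi'$-special character $\chi$ by \cite[Lemma~(2.7)]{Slattery}. For such a $\chi$ both conclusions are automatic: $\chi(1)$ is a $\pi'$-number, so $|D|\chi(1)_\pi=|D|=|G|_\pi$, and the dual of the remark at the end of Section~2 (a $\pi'$-special character restricts to a multiple of the trivial character on the normal $\pi$-subgroup $\pcore_\pi(G)$) gives $\pcore_\pi(G)\le\Ker(\chi)$. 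These are precisely the two facts about $\pi'$-special characters that you record correctly but then apply to the wrong character; with this substitution your outline becomes the paper's proof.
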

\begin{proof}
We argue by induction on $|G|$. Let $B$ lie over $\lambda\in\Irr(\pcore_{\pi'}(G))$. Suppose first that $G_\lambda=G$. Then $|D|=|G|_\pi$ and $B=\Irr(G|\lambda)$ by \autoref{facts}\eqref{f5}.
Since $\lambda$ is $\pi'$-special, there exists a $\pi'$-special $\chi\in B$ by \cite[Lemma~(2.7)]{Slattery}. It follows that $\pcore_\pi(G)\le\Ker(\chi)$ and $|D|\chi(1)_\pi=|D|=|G|_\pi$.

Now assume that $G_\lambda<G$. Let $b$ be the Fong--Reynolds correspondent of $B$ in $G_\lambda$. By induction there exists some $\psi\in b$ such that $\pcore_\pi(G_\lambda)\le\Ker(\psi)$ and $|D|\psi(1)_\pi=|G_\lambda|_\pi$. Let $\chi:=\psi^G\in B$. Since $[\pcore_\pi(G),\pcore_{\pi'}(G)]=1$ we have $\pcore_\pi(G)\le\pcore_\pi(G_\lambda)\le\Ker(\psi)$ and $\pcore_\pi(G)\le\Ker(\chi)$ (see \cite[Lemma~(5.11)]{Isaacs}). Finally, \[|D|\chi(1)_\pi=|D|\psi(1)_\pi|G:G_\lambda|_\pi=|G_\lambda|_\pi|G:G_\lambda|_\pi=|G|_\pi.\qedhere\]
\end{proof}

For every $p\in\pi$, the character $\chi$ in \autoref{lemchar} lies in a $p$-block $B_p\subseteq B$ whose defect group has order $|D|_p$. In fact, it is easy to show that every Sylow $p$-subgroup of $D$ is a defect group of $B_p$.

Our second lemma extends an elementary fact on $p$-blocks (see \cite[Theorem (9.9)(b)]{Navarro}).

\begin{Lem}\label{lemquot}
Let $N$ be a normal $\pi$-subgroup of $G$. Then $B$ contains a $\pi$-block of $G/N$ with defect group $D/N$. 
\end{Lem}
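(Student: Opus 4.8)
The plan is to induct on $|G|$, mirroring \autoref{lemchar} and reducing through the Fong--Reynolds Theorem. Write $R:=\pcore_{\pi'}(G)$ and $\overline{G}:=G/N$, and let $B$ lie over $\lambda\in\Irr(R)$. First I would record the structural facts that make the reduction possible. Since $N$ is a normal $\pi$-subgroup, $N\le\pcore_\pi(G)\le D$ by \autoref{facts}\eqref{f1}, so $D/N$ is a genuine subgroup of $\overline{G}$. As $N$ and $R$ are normal of coprime order, $[N,R]\le N\cap R=1$, whence $N$ fixes $\lambda$ and $N\le G_\lambda$; moreover $\overline{R}:=RN/N\cong R$ is a normal $\pi'$-subgroup of $\overline{G}$, the image $\bar\lambda\in\Irr(\overline{R})$ of $\lambda$ has inertial group $G_\lambda/N$ in $\overline{G}$, and $N\le\pcore_\pi(G_\lambda)$.

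The containment $\overline{B}\subseteq B$ I would settle uniformly by block domination. For each $p\in\pi$, two characters of $\overline{G}$ lying in a common $p$-block inflate to characters of $G$ lying in a common $p$-block (the classical domination of $p$-blocks, see \cite[Theorem~(9.9)(a)]{Navarro}). Since, equivalently, a $\pi$-block is a connected component of the graph on the irreducible characters in which two of them are joined whenever they share a $p$-block for some $p\in\pi$, inflation carries each $\pi$-block of $\overline{G}$ into a single $\pi$-block of $G$. Hence any $\pi$-block of $\overline{G}$ containing the inflation of a character of $B$ is automatically contained in $B$, and it remains only to exhibit, in each case, a $\pi$-block of $\overline{G}$ with defect group $D/N$ that shares a character with $B$.

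Suppose first $G_\lambda=G$. Then $\lambda$ is $G$-invariant and $D$ is a Hall $\pi$-subgroup of $G$, so $|D|=|G|_\pi$ (and $N\le D$ as above). By \autoref{lemchar} there is $\chi\in B$ with $N\le\Ker(\chi)$ and $|D|\chi(1)_\pi=|G|_\pi$, hence $\chi(1)_\pi=1$; regard $\chi$ as a character of $\overline{G}$ and let $\overline{B}$ be its $\pi$-block, with defect group $\overline{D}$. Then \autoref{facts}\eqref{f2} gives $|\overline{G}|_\pi\mid|\overline{D}|$, while $\overline{D}$ is a $\pi$-group, forcing $|\overline{D}|=|\overline{G}|_\pi=|D/N|$. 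Thus $\overline{D}$ is a Hall $\pi$-subgroup of $\overline{G}$, as is $D/N$, so the two are conjugate and $D/N$ is a defect group of $\overline{B}$; since $\chi\in\overline{B}\cap B$, domination gives $\overline{B}\subseteq B$.

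Now suppose $G_\lambda<G$ and let $b$ be the Fong--Reynolds correspondent of $B$ in $G_\lambda$, a $\pi$-block over $\lambda$ with defect group $D$ satisfying $B=\{\psi^G:\psi\in b\}$. As $N\unlhd G_\lambda$ is a normal $\pi$-subgroup with $|G_\lambda|<|G|$, induction yields a $\pi$-block $\overline{b}\subseteq b$ of $G_\lambda/N$ with defect group $D/N$, and restricting its characters to $\overline{R}$ shows that $\overline{b}$ lies over $\bar\lambda$. Applying \autoref{FR} inside $\overline{G}$ to the normal $\pi'$-subgroup $\overline{R}$ and $\bar\lambda$ (whose inertial group is $G_\lambda/N$), the induced block $\overline{B}:=\overline{b}^{\,\overline{G}}$ is a $\pi$-block of $\overline{G}$ whose defect group is a defect group of $\overline{b}$, namely $D/N$. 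For $\eta\in\overline{b}$ with inflation $\hat\eta\in b$, the compatibility of inflation with induction shows that the inflation of $\eta^{\overline{G}}\in\overline{B}$ equals $(\hat\eta)^G\in B$, so $\overline{B}$ shares a character with $B$ and the domination argument gives $\overline{B}\subseteq B$, completing the induction. I expect this case to be the main obstacle: the degree relation \autoref{facts}\eqref{f2} alone pins down only the order $|D/N|$, so obtaining the subgroup $D/N$ itself forces the passage through Fong--Reynolds, and the delicate points are that $N$ survives the reduction (i.e.\ $N\le G_\lambda$ and descends to a normal $\pi$-subgroup of $G_\lambda/N$), that $RN/N$ remains a normal $\pi'$-subgroup to which \autoref{FR} applies even though it need not equal $\pcore_{\pi'}(\overline{G})$, and that inflation commutes with induction.
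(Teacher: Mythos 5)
Your proposal is correct and follows essentially the same route as the paper: induction on $|G|$, the $G$-invariant case handled via \autoref{lemchar} together with $p$-block domination and the conjugacy of Hall $\pi$-subgroups, and the non-invariant case reduced through the Fong--Reynolds correspondent, where the induction hypothesis applied to $G_\lambda$ produces $\overline{b}$ and its correspondent $\overline{b}^{\,\overline{G}}$ is the desired block. Your extra checks (that $N\le G_\lambda$, that $RN/N$ is a normal $\pi'$-subgroup with inertial group $G_\lambda/N$, and that inflation commutes with induction) are exactly the details the paper leaves implicit.
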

\begin{proof}
Again we argue by induction on $|G|$. Let $\lambda\in\Irr(\pcore_{\pi'}(G))$ be under $B$. Suppose first that $G_\lambda=G$. Then $|D|=|G|_\pi$. By \autoref{lemchar}, there exists some $\chi\in B$ such that $N\le\pcore_\pi(G)\le\Ker(\chi)$ and $\chi(1)_\pi=1$. Hence, we may consider $\chi$ as a character of $\overline{G}:=G/N$. As such, $\chi$ lies in a $\pi$-block $\overline{B}$ of $\overline{G}$.
For any $\psi\in\overline{B}$ there exists a sequence of characters $\chi=\chi_1,\ldots,\chi_k=\psi$ such that $\chi_i$ and $\chi_{i+1}$ lie in the same $p$-block of $\overline{G}$ for some $p\in\pi$ and $i=1,\ldots,k-1$. Then $\chi_i$ and $\chi_{i+1}$ also lie in the same $p$-block of $G$. This shows that $\psi\in B$ and $\overline{B}\subseteq B$. For a defect group $P/N$ of $\overline{B}$ we have
\[|P/N|=\max\Bigl\{\frac{|\overline{G}|_\pi}{\psi(1)_\pi}:\psi\in\overline{B}\Bigr\}=\frac{|\overline{G}|_\pi}{\chi(1)_\pi}=|\overline{G}|_\pi=|D/N|\]
by \autoref{facts}\eqref{f2}. Since the Hall $\pi$-subgroups are conjugate in $G$, we conclude that $D/N$ is a defect group of $\overline{B}$.

Now let $G_\lambda<G$, and let $b$ be the Fong--Reynolds correspondent of $B$ in $G_\lambda$. After conjugation, we may assume that $D$ is a defect group of $b$. By induction, $b$ contains a block $\overline{b}$ of $G_\lambda/N$ with defect group $D/N$. If we regard $\lambda$ as a character of $\pcore_{\pi'}(G)N/N\cong\pcore_{\pi'}(G)$, we see that $\overline{G}_\lambda=G_\lambda/N$. It follows that the Fong--Reynolds correspondent $\overline{B}=\overline{b}^{\overline{G}}$ of $\overline{b}$ is contained in $B$ and has defect group $D/N$.
\end{proof}

The next result extends one half of \cite[Proposition]{KLandau1} to $\pi$-blocks.

\begin{Lem}\label{lemsub}
Let $N\unlhd G$, and let $b$ be a $\pi$-block of $N$ covered by $B$. Then $k(b)\le|G:N|k(B)$.
\end{Lem}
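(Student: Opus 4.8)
The plan is to prove the inequality by a double-counting argument on the incidence set
\[
S:=\{(\chi,\psi): \chi\in B,\ \psi\in b,\ [\chi_N,\psi]\ne 0\}
\]
of character pairs. I would bound $|S|$ from below using the covering hypothesis and from above using ordinary Clifford theory, and then combine the two estimates to obtain $k(b)\le|G:N|\,k(B)$.

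For the lower bound I would count $S$ by its second coordinate. Since $B$ covers $b$, \autoref{facts}\eqref{f4} guarantees that every $\psi\in b$ admits some $\chi\in B$ with $[\chi_N,\psi]\ne 0$, so the fibre of $S$ over each $\psi\in b$ is non-empty. As these fibres are disjoint, this yields
\[
k(b)=\sum_{\psi\in b}\bigl|\{\chi\in B:(\chi,\psi)\in S\}\bigr|\ge\sum_{\psi\in b}1=k(b)\le|S|.
\]

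For the upper bound I would count $S$ by its first coordinate. Fix $\chi\in B$. By Clifford's theorem the irreducible constituents of $\chi_N$ form a single $G$-orbit, namely the $G$-conjugates of any one constituent $\psi\in\Irr(N)$, and their number equals the index $|G:G_\psi|$ of the inertial group $G_\psi$. Since $N\le G_\psi$, this index divides $|G:N|$ and is in particular at most $|G:N|$. Hence each $\chi\in B$ lies over at most $|G:N|$ characters of $N$, those lying in $b$ being among them, so $\chi$ occurs in at most $|G:N|$ pairs of $S$. This gives $|S|\le|G:N|\,k(B)$, and combining with the lower bound proves the claim.

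I do not expect a genuine obstacle here, since the only $\pi$-block input is the covering statement \autoref{facts}\eqref{f4} and the remaining ingredient is the standard fact that a restriction to a normal subgroup has its constituents in one orbit of size dividing the index. The sole point requiring a moment's care is that Clifford theory controls the total number of constituents of $\chi_N$, not merely those lying in $b$; as the latter set is contained in the former, the stated bound is unaffected.
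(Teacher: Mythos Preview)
Your argument is correct and is essentially the paper's own proof, rephrased as a double count on the incidence set $S$: the paper writes the same two ingredients directly as $k(b)\le\sum_{\chi\in B}\lvert\Irr(N\mid\chi)\rvert\le|G:N|\,k(B)$, using \autoref{facts}\eqref{f4} for the first inequality and Clifford theory for the second. One cosmetic slip: in your displayed lower bound the leftmost ``$k(b)=$'' should read ``$|S|=$'', since $\sum_{\psi\in b}\lvert\{\chi\in B:(\chi,\psi)\in S\}\rvert$ computes $|S|$, not $k(b)$; the intended chain $|S|\ge\sum_{\psi\in b}1=k(b)$ is what you want.
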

\begin{proof}
By \autoref{facts}\eqref{f4}, $b\subseteq\bigcup_{\chi\in B}{\Irr(N|\chi)}$. For every $\chi\in B$ the restriction $\chi_N$ is a sum of $G$-conjugate characters according to Clifford theory. In particular, $\lvert\Irr(N|\chi)\rvert\le|G:N|$ and
\[k(b)\le\sum_{\chi\in B}\lvert\Irr(N|\chi)\rvert\le |G:N|k(B).\qedhere\]
\end{proof}

It is well-known that the number of irreducible characters in a $p$-block $B$ is greater or equal than the number of conjugacy classes which intersect a given defect group of $B$ (see \cite[Problem (5.7)]{Navarro}). For $\pi$-blocks we require the following weaker statement.

\begin{Lem}\label{lemnormal}
Let $N$ be a normal $\pi$-subgroup of $G$. Then the number of $G$-conjugacy classes contained in $N$ is at most $k(B)$.
\end{Lem}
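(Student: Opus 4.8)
The plan is to read the block-orthogonality relation \autoref{facts}\eqref{f3} as a statement about the linear independence of the ``columns'' of character values indexed by the classes in $N$. Let $g_1,\ldots,g_m$ be representatives of the $G$-conjugacy classes contained in $N$; since $N$ is a $\pi$-group, every $g_i$ is a $\pi$-element. For each $i$ I would form the vector $v_i:=(\chi(g_i))_{\chi\in B}\in\CC^{k(B)}$. By \autoref{facts}\eqref{f3}, distinct representatives $g_i,g_j$ are non-conjugate $\pi$-elements, so $\sum_{\chi\in B}\chi(g_i)\overline{\chi(g_j)}=0$. Thus the $v_i$ are pairwise orthogonal, and it remains only to rule out $v_i=0$: once the $v_i$ are known to be nonzero, pairwise orthogonality forces linear independence and hence $m\le k(B)=\dim\CC^{k(B)}$.

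The real content is therefore to exhibit, for every $g\in N$, some character in $B$ that does not vanish at $g$. This is exactly where \autoref{lemchar} enters: it supplies a character $\chi_0\in B$ with $\pcore_\pi(G)\le\Ker(\chi_0)$. Because $N$ is a normal $\pi$-subgroup, we have $N\le\pcore_\pi(G)\le\Ker(\chi_0)$, so $\chi_0$ is constant, equal to $\chi_0(1)$, on all of $N$. In particular $\chi_0(g_i)=\chi_0(1)\ne 0$, whence the squared norm $\sum_{\chi\in B}\lvert\chi(g_i)\rvert^2\ge\chi_0(1)^2>0$ for every $i$, and no column vanishes.

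Combining the two observations, $v_1,\ldots,v_m$ are pairwise orthogonal nonzero vectors in the $k(B)$-dimensional space $\CC^{k(B)}$, hence linearly independent, which yields $m\le k(B)$ as required. I expect the only points needing care to be the verification that \autoref{facts}\eqref{f3} really applies to each pair $g_i,g_j$ (it does, since they are $\pi$-elements) and the choice of a character in $B$ that annihilates $N$; both become routine once one notes that $N\le\pcore_\pi(G)$, so that the character from \autoref{lemchar} automatically contains $N$ in its kernel.
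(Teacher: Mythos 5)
Your proposal is correct and matches the paper's own argument exactly: both use \autoref{lemchar} to produce a character in $B$ containing $N\le\pcore_\pi(G)$ in its kernel (hence nonvanishing on the class representatives) and then apply the orthogonality relation \autoref{facts}(iii) to conclude that the column vectors are pairwise orthogonal, nonzero, and therefore linearly independent. Nothing is missing.
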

\begin{proof}
Let $R\subseteq N$ be a set of representatives for the $G$-conjugacy classes inside $N$. By \autoref{lemchar}, there exists some $\chi\in B$ such that $\chi(r)=\chi(1)\ne 0$ for every $r\in R$. Thus, the columns of the matrix $M:=(\chi(r):\chi\in B,\,r\in R)$ are non-zero. By \autoref{facts}\eqref{f3}, the columns of $M$ are pairwise orthogonal, so in particular they are linearly independent. Hence, the number of rows of $M$ is at least $|R|$. 
\end{proof}

We can prove the main theorem now.

\begin{proof}[Proof of Theorem~A]
The proof strategy follows closely the arguments in \cite{KLandau3}.
We construct inductively a monotonic function $\gamma:\NN\to\NN$ with the desired property. To this end, let $B$ be a $\pi$-block of a $\pi$-separable group $G$ with defect group $D$ and $k:=k(B)$. If $k=1$, then the unique character in $B$ has $p$-defect $0$ for every $p\in\pi$. It follows from \autoref{facts}\eqref{f2} that this can only happen if $D=1$. Hence, let $\gamma(1):=1$. 

Now suppose that $k>1$ and $\gamma(l)$ is already defined for $l<k$. Let $N:=\pcore_{\pi'}(G)$. By a repeated application of the Fong--Reynolds Theorem for $\pi$-blocks and \autoref{facts}\eqref{f5}, we may assume that $B$ is the set of characters lying over a $G$-invariant $\lambda\in\Irr(N)$. Then $D$ is a Hall $\pi$-subgroup of $G$. By \cite[Problem~(6.3)]{Navarro2}, there exists a character triple isomorphism \[(G,N,\lambda)\to(\widehat{G},\widehat{N},\widehat{\lambda})\] 
such that $G/N\cong\widehat{G}/\widehat{N}$ and $\widehat{N}=\pcore_{\pi'}(\widehat{G})\le\Z(\widehat{G})$. Then $\widehat{B}:=\Irr(\widehat{G}|\widehat{\lambda})$ is a $\pi$-block of $\widehat{G}$ with defect group $\widehat{D}\cong D$ and $k(\widehat{B})=k$. After replacing $G$ by $\widehat{G}$ we may assume that $N\le\Z(G)$.
Then 
\[\pcore_{\pi'\pi}(G)=N\times P\] 
where $P:=\pcore_{\pi}(G)$. If $P=1$, then $G$ is a $\pi'$-group and we derive the contradiction $k=1$. Hence, $P\ne 1$.

Let $M$ be a minimal normal subgroup of $G$ contained in $P$. 
By \autoref{lemquot}, $B$ contains a $\pi$-block $\overline{B}$ of $G/M$ with defect group $D/M$. Since the kernel of $B$ is a $\pi'$-group (see \cite[Theorem~(6.10)]{Navarro}), we have $k(\overline{B})<k$. By induction, it follows that 
\begin{equation}\label{DM}
|D/M|\le \gamma(k-1)
\end{equation}
where we use that $\gamma$ is monotonic. 
Let $H/M$ be a Hall $\pi'$-subgroup of $G/M$, and let 
\[K:=\bigcap_{g\in G}gHg^{-1}\unlhd G.\] 
Then 
\[|G:K|\le|G:H|!=|G/M:H/M|!=(|G/M|_\pi)!=|D/M|!\le \gamma(k-1)!\]
by \eqref{DM}. Let $b$ be a $\pi$-block of $K$ covered by $B$. By \autoref{lemsub}, 
\begin{equation}\label{kb}
k(b)\le|G:K|k\le \gamma(k-1)!k.
\end{equation}
Thus we have reduced our problem to the block $b$ of $K$. Since $K/M\le H/M$ is a $\pi'$-group, $b$ has defect group $M$ by \autoref{facts}\eqref{f1}.

As a minimal normal subgroup, $M$ is a direct product of isomorphic simple groups. 
Suppose first that $M$ is an elementary abelian $p$-group for some $p\in\pi$. 
Then $K$ is $p$-solvable and $b$ is just a $p$-block with defect group $M$. Hence, with the notation from \autoref{K} we have 
\begin{equation}\label{M1}
|M|\le \alpha(k(b))\le \alpha\bigl(\gamma(k-1)!k\bigr)
\end{equation}
by \eqref{kb}. 

Now suppose that $M=S\times\ldots\times S=S^n$ where $S$ is a non-abelian simple group. 
Let $x_1,\ldots,x_s\in S$ be representatives for the orbits of $\Aut(S)$ on $S\setminus\{1\}$. Since $\Aut(M)\cong\Aut(S)\wr S_n$ (where $S_n$ denotes the symmetric group of degree $n$), the elements $(x_i,1,\ldots,1)$, $(x_i,x_i,1,\ldots,1),\ldots,(x_i,\ldots,x_i)$ of $M$ with $i=1,\ldots,s$ lie in distinct conjugacy classes of $K$. Consequently, \autoref{lemnormal} yields $ns\le k(b)$. Now with the notation of \autoref{kohl} we deduce that $|S|\le \beta(s+1)$ and
\begin{equation}\label{M2}
|M|=|S|^n\le \beta(s+1)^n\le \beta\bigl(k(b)+1\bigr)^{k(b)}\le \beta\bigl(\gamma(k-1)!k+1\bigr)^{\gamma(k-1)!k}
\end{equation}
by \eqref{kb}.

Setting 
\[\gamma(k):=\gamma(k-1)\max\bigl\{\alpha\bigl(\gamma(k-1)!k\bigr),\,\beta\bigl(\gamma(k-1)!k+1\bigr)^{\gamma(k-1)!k}\bigr\}\]
we obtain
\[|D|=|D/M||M|\le\gamma(k)\]
by \eqref{DM}, \eqref{M1} and \eqref{M2}.
Obviously, $\gamma$ is monotonic.
\end{proof}

\begin{proof}[Proof of Theorem~B]
We have seen in the proof of Theorem~A that $k(B)=1$ implies $D=1$. Conversely, \cite[Theorem~3]{SambalePi} shows that $D=1$ implies $k(B)=1$. 

Now let $k(B)=2$. Then $B$ is a $p$-block for some $p\in\pi$. By a result of Brandt~\cite[Theorem~A]{Brandt}, $p=2$ and $|D|_2=2$ follows from \autoref{facts}\eqref{f2}. For every $q\in\pi\setminus\{2\}$, $B$ consists of two $q$-defect $0$ characters. This implies $D=C_2$. Conversely, if $D=C_2$, then we obtain $k(B)=2$ by \cite[Theorem~3]{SambalePi}. 

Finally, assume that $k(B)=3$. As in the proof of Theorem~A, we may assume that 
\[\pcore_{\pi'\pi}(G)=\pcore_{\pi'}(G)\times P\] 
with $P:=\pcore_{\pi}(G)\ne 1$. 
By the remark after \autoref{lemchar}, for every $p\in\pi$ there exists a $p$-block contained in $B$ whose defect group has order $|D|_p$. If $|D|_2\ge 4$, we derive the contradiction $k(B)\ge 4$ by \cite[Proposition~1.31]{habil}. Hence, $|D|_2\le 2$. By \autoref{lemquot}, $B$ contains a $\pi$-block $\overline{B}$ of $G/P$ with defect group $D/P$ and $k(\overline{B})<k(B)$. The first part of the proof yields $|D/P|\le 2$. In particular, $P$ is a Hall subgroup of $D$.
From \autoref{lemnormal} we see that $P$ has at most three orbits under $\Aut(P)$.
If $P$ is an elementary abelian $p$-group, then $B$ contains a $p$-block $B_p$ with normal defect group $P$. The case $p=2$ is excluded by the second paragraph of the proof. Hence, $p>2$ and $k(B_p)=k(B)=3$. Now \cite[Proposition~15.2]{habil} implies $|P|=3$ and $|D|\in\{3,6\}$. A well-known lemma by Hall--Higman states that $\C_G(P)\le \pcore_{\pi'\pi}(G)$. Hence, $|D|=6$ implies $D\cong S_3$.
It remains to deal with the case where $P$ is not elementary abelian. In this case, a result of Laffey--MacHale~\cite[Theorem~2]{LaffeyMacHale} shows that $P=P_1\rtimes Q$ where $P_1$ is an elementary abelian $p$-group and $Q$ has order $q\in\pi\setminus\{p\}$. Moreover, $|P_1|\ge p^{q-1}$. In particular, $p>2$ since $|D|_2\le 2$. Again $B$ contains a $p$-block $B_p$ with normal defect group $P_1$ and $k(B_p)=3$. As before, we obtain $|P_1|=3$ and $q=2$. This leads to $D\cong S_3$.

Conversely, let $D\in\{C_3,S_3\}$. By the first part of the proof, $k(B)\ge 3$.
Let $N:=\pcore_{\pi'}(G)$. Using the Fong--Reynolds Theorem for $\pi$-blocks again, we may assume that $B=\Irr(G|\lambda)$ where $\lambda\in\Irr(N)$ is $G$-invariant and $D$ is a Hall $\pi$-subgroup of $G$. By a result of Gallagher (see \cite[Theorem~5.16]{Navarro2}), we have $k(B)\le k(G/N)$. Moreover, $\pcore_{\pi}(G/N)\le DN/N$ and 
\[\C_{G/N}(\pcore_{\pi}(G/N))\le\pcore_{\pi}(G/N)\] 
by the Hall--Higman Lemma mentioned above. It is easy to see that this implies $G/N\le S_3$. Hence, $k(B)\le 3$ and we are done. 
\end{proof}

\section*{Acknowledgment}
The author is supported by the German Research Foundation (\mbox{SA 2864/1-1} and \mbox{SA 2864/3-1}).

\end{document}